\newtheorem{thm}{Theorem}[section]
\newtheorem{lema}{Lemma}[section]
\newtheorem{cor}{Corollary}[section]					
\newtheorem{prop}{Proposition}[section]
\newtheorem{obs}{Remark}[section]
\newtheorem{ass}{Assumption}[section]
\def\p{^{\prime}}
\def\cN{\mathcal{N}}
\def\U{\mathrm{U}}
\def\R{\mathbb{R}}
\def\T{T}
\def\bU{\bar{\U}}
\def\E{\mathbb{E}}
\def\P{\mathbb{P}}
\def\one{{\bf 1}\hskip-.5mm}
\let\epsilon=\varepsilon
\title{\bf{A model for neural activity in the absence of external stimuli}}
\author{Aline Duarte  and Guilherme Ost \thanks{e-mail addresses: \tt{aline.duart@gmail} and \tt{guilhermeost@gmail.com}}
\vspace{.5cm} \\
{\it {Universidade de S\~ao Paulo} and GSSI - L'Aquila \thanks{The authors are PhD students from the Universidade de S\~ao Paulo and this work was developed during their partial completion doctoral program at GSSI both being fully supported by CNPq.}  }
}
\date{\today}
\begin{document}

\maketitle

\begin{abstract}
We study a stochastic process describing the continuous time evolution of the membrane potentials of finite system of neurons  in the absence of external stimuli. The values of the membrane potentials 
evolve under the effect of {\it chemical synapses}, {\it electrical synapses} and \textit{leak currents}. The evolution of the process can be informally described as follows. Each neuron spikes randomly following a point process with rate depending on its membrane potential.  When a neuron spikes, its membrane potential is immediately reset to a resting value. Simultaneously, the membrane potential of the neurons which are influenced by the spiking neuron receive an additional positive value. Furthermore, between consecutive spikes,  the system follows a deterministic motion due both to electrical synapses and leak currents. Electrical synapses push the system towards its average potential, while leak currents attract the membrane potential of each neuron to the resting value. 

We show that in the absence of leakage the process converges to an unique invariant measure, whenever the initial configuration is non null. More interesting, when leakage is present,
we prove the system stops spiking after a finite amount of time almost surely. This implies that the unique invariant measure is supported only by the null configuration.
\end{abstract}

{\it Key words} : piecewise deterministic Markov process, limiting distribution, neuronal systems, chemical synapses, electrical synapses, leak current\\

{\it AMS Classification}: 60K35, 60F99, 60J25

\section{Introduction}

We study the behavior of a finite number of interacting neurons  in the absence of external stimuli our  goal  being to determine the long-run behavior of the process. Our system is composed of  $N$ neurons whose state at time $t\geq 0$
is specified  by $\U(t)=\left(\U_1(t),\ldots \U_N(t)\right)$, with $\U(t)\in \R_{+}^N.$ For each neuron $1\leq i\leq N$ and each time $t\geq 0,$ $\U_i(t)$ represents the membrane potential of neuron $i$ at time $t$.
We consider two kinds of interactions among neurons and also a constant interplay between neurons and the environment.

More precisely the neurons interact via {\it   electrical} and {\it chemical synapses}. Electrical synapses are due to so-called \textit{gap-junction channels} between neurons which induce a constant sharing of potential, pushing the system towards its average value. By contrast, chemical synapses are point events which can be described as follows. Each neuron spikes randomly
at rate $\varphi(\U)\ge 0$ which depends on its membrane potential $\U$. The {\it spiking rate} $u\mapsto\varphi(u)$ is a non decreasing continuous function, positive at $u>0$ and both differentiable and vanishing at $0$ (in agreement with the assumption of non external stimuli).
When neuron $i$ spikes, its membrane potential is immediately reset to a resting potential $0$. Simultaneously, the neurons which are influenced by neuron $i$ receive an additional positive value to their membrane potential. This value may vary for each pair of neurons.
Moreover, in the whole time, neurons loose potential to the environment, due to \textit{leakage channels} which  pushes down the membrane potential of each neuron toward zero. This outgoing constant flow of potential is defined as {\it leak currents.} For technical details we refer the reader to 
Gersnter and Kistler \cite{Gerstner:2002:SNM:583784}.

 Our system is inspired by the one introduced by Galves and L\"ocherbach 
\cite{GalEva:13}. This model is an example of piecewise-deterministic Markov processes (PDPs) introduced by Davis \cite{Davis:84}. Such processes combine random jump events, the chemical synapses, with deterministic continuous evolutions, in our case due both to electrical synapses and leak currents.  The PDPs have been used also to model neural systems by other authors, see for instance the papers by
 Riedler, Thieullen and Wainrib \cite{Thieullen:12}, 
De Masi et al. \cite{Errico:14}, Fournier and L\"ocherbach \cite{Evafou:14} and  
Robert and Touboul
 \cite{Robert:14}.

Chemical synapses and leakage  make the system non-conservative. Moreover, there is an  evident competition between the incoming energy induced by the spikes and the outgoing energy induced by leak currents. Therefore it is natural to ask about the limiting behavior  of the system as time $t \rightarrow \infty$. The main results of the paper, presented in Theorem \ref{UstopSpiking} and Theorem 
\ref{med_inv_semleaking}, provide a complete description of the asymptotic distribution of the process.
Theorem~\ref{UstopSpiking} states that  under the presence of leakage almost surely there are only a finite number of spikes
and the system converges to an ``inactive global state''  interpreted as  ``brain sleep''.  
On the other hand, when leakage is absent, we prove that the process is {\it Harris-ergodic}, whenever the initial configuration is non null. This is the content of the Theorem \ref{med_inv_semleaking}.

Our paper is organized as follows. In Section \ref{ModelDef}, we introduce the process, prove its existence and we state the
main results, Theorem \ref{UstopSpiking} and Theorem \ref{med_inv_semleaking}. In Section 3, we prove 
Theorem \ref{UstopSpiking}, while in Section 4, we prove Theorem~\ref{med_inv_semleaking}. In Section 5, 
we briefly compare similar results recently obtained by Robert and Touboul 
 \cite{Robert:14}.

\section{Model definition and main results}
\label{ModelDef}

Let $\cN=\{1, \cdots, N\}$ be a finite set of neurons, for some fixed integer $N\geq 1$ and consider the family of synaptic weights ${\bf W}=\left(W_{i\rightarrow j}\right)_{i,j\in \cN}\in \R_+^{\binom{N}{2}}$ such that $W_{i\rightarrow i}=0$ for all $i\in\cN$. The value $W_{i\rightarrow j}$ corresponds to the value added to the membrane potential of neuron $j$ when the neuron $i$ spikes.

We consider a continuous time Markov process
$$
\U(t)=(\U_1(t),\ldots, \U_N(t)), \ t\geq 0,
$$
taking values in $\R^{N}_+$, whose infinitesimal generator is given for any smooth test function  $f:\R^{N}_+\rightarrow \R$, by
\begin{equation}
\label{geradorU}
\mathcal{L}f(u)=\! \sum_{i \in \cN} \varphi(u_i)[f(\Delta_i(u))-f(u)]
- \lambda\sum_{i\in\cN}\Bigl(\frac{\partial f}{\partial u_i}(u)[u_i-\bar{u}] 
\Bigr)
-\alpha\sum_{i\in\cN}\Bigl(\frac{\partial f}{\partial u_i}(u)[u_i] \Bigr),
\end{equation}
where, for all $i\in\cN$, $\Delta_i:\R^{N}_+\rightarrow \R_+^N$ is defined by
\begin{equation*}
(\Delta_i(u))_j=\left\lbrace
			\begin{array}{ll}
				u_j+W_{i\rightarrow j}, & \mbox{if} \ j\neq i, \\
				0, & \mbox{if} \ j=i,
			\end{array}
	\right.
\end{equation*}
$\lambda,\alpha\geq 0$ are positive parameters modelling, respectively, the strength of electrical synapses and the leakage effect, $\bar{u}=(1/N)\sum_{i=1}^N u_i$ and

\begin{ass} 
\label{a1}
$\varphi:\R_+ \rightarrow \R_+$ is a non-decreasing continuous function such that $\varphi(0)=0$, $\varphi(u)>0$ for $u>0$. 
\end{ass}

Assumption \ref{a1} implies that external stimuli are not considered. This is a consequence of the condition $\varphi(0)=0$. Moreover, from the neurobiological point of view, it is
reasonable to assume that $\varphi$ is a non-decreasing function since an addition in the membrane potential increases the probability of a spiking to occur. However, we could remove the continuity assumption on $\varphi$, imposing only a weaker condition (see final discussion).

The first term in (\ref{geradorU}) depicts how the chemical synapses are incorporated in our  model. Neurons whose potential is $u$ spike at rate $\varphi(u)$. Intuitively this means 
that any initial configuration $u\in \R_+^N$ of the membrane potentials
$$\P(\U(t)=\Delta_i(u)|\U(0)=u)=\varphi(u_i)t+o(t), \quad \mbox{as} \ t \rightarrow 0.$$
Thus, the function $\varphi$ is called firing or spiking rate of the
system. 

The second and third terms in (\ref{geradorU}) represent the electrical synapses and leak currents respectively. They describe the deterministic time evolution of the system between two consecutive spikes. More specifically, in an interval of time $[a,b]$, without occurrence of spikes in the whole system, the membrane potential of neuron $i\in\cN$ obeys the following ordinary differential equation 
\begin{equation}
\label{determotion}
{\frac{d}{dt}}\U_i(t)=-\alpha \U_i(t)-\lambda(\U_i(t)-\bU(t)).
\end{equation}
Notice that the first term on the right-hand side of (\ref{determotion}) pushes simultaneously all neurons to the resting state, while the second term tends to attract the neurons to the average potential. 

Our first theorem proves the existence of the process. Its proof is a simple adaptation of Theorem 1 of \cite{Errico:14} to our case. In what follows, for any vector $u\in \R_+^N$, we write $\|u\|=\max_{i\in \cN} u_i.$

\begin{thm}
\label{existenceofU}
Let $\varphi:\R_+\rightarrow \R_+$ be any function satisfying Assumption \ref{a1}. For any $N \geq 1 $ and any $u \in \R_+^N $ there exists a unique strong Markov process $\U(t) $ taking values in $\R_+^N $ starting from $u$  whose generator  is given by \eqref{geradorU}.
\end{thm}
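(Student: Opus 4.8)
The plan is to realise $\U$ as a piecewise deterministic Markov process in the sense of Davis: between jumps it follows the deterministic flow of \eqref{determotion}, at random times governed by the hazard rate $u\mapsto\sum_{i}\varphi(u_i)$ it jumps through the maps $\Delta_i$, and the whole difficulty is to show that the jump times do not accumulate. First I would integrate \eqref{determotion}. Since $\tfrac{d}{dt}\bU(t)=-\alpha\,\bU(t)$ and $\tfrac{d}{dt}\bigl(\U_i(t)-\bU(t)\bigr)=-(\alpha+\lambda)\bigl(\U_i(t)-\bU(t)\bigr)$, the flow is explicit,
\begin{equation*}
\phi_t(u)_i=e^{-(\alpha+\lambda)t}\,u_i+\bar u\,e^{-\alpha t}\bigl(1-e^{-\lambda t}\bigr),\qquad i\in\cN,\ t\geq 0,
\end{equation*}
with $\bar u=\tfrac1N\sum_j u_j$. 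Every summand is nonnegative, so $\phi_t(\R_+^N)\subset\R_+^N$ and the state space is flow-invariant; moreover $\|\phi_t(u)\|_\infty\le e^{-\alpha t}\|u\|_\infty\le\|u\|_\infty$, and $t\mapsto\phi_t(u)$ is $C^\infty$, which is what makes the drift part of \eqref{geradorU} meaningful on smooth $f$.

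Next I would construct $\U$ by the usual recursion for such processes: put $T_0=0$, $\U(0)=u$; given $\U(T_n)$, draw an inter-jump time $\tau_{n+1}$ with hazard rate $t\mapsto\sum_{i\in\cN}\varphi\bigl(\phi_t(\U(T_n))_i\bigr)$, set $\U(t)=\phi_{t-T_n}(\U(T_n))$ on $[T_n,T_{n+1})$ with $T_{n+1}=T_n+\tau_{n+1}$, and at $T_{n+1}$ let $\U$ jump to $\Delta_I(\U(T_{n+1}-))$ with $I$ chosen with probability proportional to $\varphi(\U(T_{n+1}-)_i)$. This defines $\U(t)$ for $t<T_\infty:=\lim_nT_n$; the regenerative structure of the recursion yields the strong Markov property, and Dynkin's formula on smooth $f$ identifies $\mathcal L$ of \eqref{geradorU} as its generator. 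Uniqueness follows because any process with generator $\mathcal L$ must follow $\phi$ between jumps (the drift part determines the flow) and jump with the above kernel (the jump part determines rates and targets), hence coincides with the constructed one.

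It remains to prove $T_\infty=\infty$ a.s., and this is the main obstacle: since $\varphi$ may grow arbitrarily fast at infinity, merely bounding the total rate along a trajectory does not preclude infinitely many jumps in finite time. I would use the Lyapunov function $V(u)=\sum_{i\in\cN}u_i$. With $W_i:=\sum_{j\in\cN}W_{i\rightarrow j}$ one has $V(\Delta_i(u))-V(u)=W_i-u_i$ and $\sum_i(u_i-\bar u)=0$, so
\begin{equation*}
\mathcal L V(u)=\sum_{i\in\cN}\varphi(u_i)\,(W_i-u_i)-\alpha\,V(u).
\end{equation*}
Because $\varphi$ is non-decreasing and real-valued, $\varphi(u_i)(W_i-u_i)\le\varphi(W_i)\,W_i$ for every $u\in\R_+^N$ (trivially when $u_i\ge W_i$, and by monotonicity otherwise), whence $\mathcal L V(u)\le C_0:=\sum_{i\in\cN}\varphi(W_i)W_i<\infty$. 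This is the classical drift condition for non-explosion of a piecewise deterministic Markov process: by Dynkin's formula applied up to $T_n\wedge t$ it gives $\E\,V(\U(T_n\wedge t))\le V(u)+C_0t$ for all $n$ and $t$, and from this linear control of $V$ one deduces in the standard way (see Davis 1984) that $\P(T_\infty<\infty)=0$.

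Once non-explosion is established, the recursion of the second step produces a genuine $\R_+^N$-valued strong Markov process, and the uniqueness argument there shows it is the only process with generator \eqref{geradorU}, completing the proof. I would emphasise that the structural fact making the drift bounded — and hence making the existence proof work for every $\varphi$ satisfying Assumption 1, with no growth restriction at infinity — is that a spiking neuron is reset to $0$, so that $V$ can increase only through the finite weights $W_i$.
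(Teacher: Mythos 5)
Your proof is correct, but it reaches non-explosion by a genuinely different route than the paper. The paper follows De Masi et al.\ and argues pathwise: it introduces the counter $k(t)$ of spikes emitted by neurons whose potential does not exceed $2E$ (with $E=\max_i\sum_{j\neq i}W_{i\rightarrow j}$), observes that a spike of neuron $i$ changes the average by $\frac1N(E_i-\U_i(t^-))$ --- so only ``low-potential'' spikes can raise the total energy --- dominates $k(t)$ by a Poisson process of intensity $N\varphi(2E)$, and then bounds the \emph{total} number of spikes and $\mU(t)$ linearly in $k(t)$. You exploit exactly the same structural fact (the reset to $0$ means a spike adds at most $\sum_j W_{i\to j}$ to the total potential while removing $u_i$), but package it as a Foster--Lyapunov drift bound $\mathcal L V\le C_0$ for $V(u)=\sum_i u_i$, and conclude via Dynkin/supermartingale estimates. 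Your computation of $\mathcal L V$ and the bound $\varphi(u_i)(W_i-u_i)\le\varphi(W_i)W_i$ are correct, as is the explicit flow $\phi_t$. The paper's pathwise version buys an almost-sure Poisson domination of the jump counter; yours buys a clean moment bound $\E V(\U(t))\le V(u)+C_0t$ and is arguably more transparent. One point you should make explicit in the last step: the deduction of $\P(T_\infty<\infty)=0$ from the drift bound also uses that the total jump rate $\sum_i\varphi(u_i)$ is bounded on sublevel sets of $V$ (which holds here because $\varphi$ is non-decreasing, hence bounded on bounded sets); the maximal inequality for the non-negative supermartingale $V(\U(s\wedge T_n))+C_0(t-s\wedge T_n)$ confines the pre-explosion trajectory to such a sublevel set with probability arbitrarily close to $1$, and there the jumps are Poisson-dominated. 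With that sentence added, the argument is complete.
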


\begin{proof}
Let $N_i(t), \ t \ge  0$, be the simple point process on $\R_+$ which counts the
jump events of neuron $i\in \cN$ up to time $t$. Define $m_i=\sum_{j\ne i}W_{i\rightarrow j}$ and $m=\max_{i\in\cN}m_i$ and, following \cite{Errico:14}, consider the following random variable, for all $t>0$,
$$
k(t)=\sum_{i\in\cN} \int_0^t \one\{\U_i(s^-)\le 2m\}dN_i(s).
$$
The random variable $k(t)$ counts the number of spikes of neurons whose the potential is at most $2m.$
Suppose $\U_i$ fires at time $t$, in this case
\begin{equation*}
\bU(t)=\frac{1}{N}\sum_{j \neq i} \big( \U_j(t^-) + W_{i \rightarrow j} \big)
= \bU(t^-) + \frac{1}{N} \big( m_i - \U_i(t^-)\big).
\end{equation*}
Now, using the expression of $\bU(t)$ above and  adapting the proof of Theorem 1 of 
\cite{Errico:14}, we have the following inequalities for all $t>0$,
\begin{align*}
&\bU(t)\le \bU(0) + \frac{m}{N}k(t),  \ \
mN(t)\le N\bU(0)+ 2m k(t), \\ 
\intertext{where $N(t)=\sum_{i\in \cN}N_i(t)$ and} \
&\|\U(t)\|\le (N+1)\|\U(0)\|+2mk(t).
\end{align*}
Since we can bound $mk(t)$ by a Poisson process of intensity $N\varphi(2m)$, the second inequality above shows that number of jumps of the process is finite almost surely on any finite time
interval. To conclude the proof just note that the construction of the process can be achieved by gluing together trajectories given by the deterministic flow between successive jump times. This procedure is feasible since the number of jumps of the process is finite on any finite interval.
\end{proof}

Now, we shall present an elementary argument which shows that for all leakage rate $\alpha$ large enough and if the firing rate $\varphi$ is globally Lipschitz with $\varphi(0)=0$, the system goes extinct. This result was the starting point of this paper. The idea of this proof was taken from discussions with Galves and L\"ocherbach. The result is the following. In the sequel, for any $u\in \R_+^N$, we write $\P_u$ to denote the probability measure under which $\U(0)=u$ almost surely. In this way, we denote by $\E_u$ the expectation taking with respect to the probability measure $\P_u.$

\begin{thm}
\label{EvaStopSpik}
For any $N\geq 1$, $\alpha\geq 0$, $\lambda\geq 0$ and $c$-Lipschitz function $\varphi:\R_+\rightarrow\R_+$ such that $c>0$ and $\varphi(0)=0$, the following inequality holds, for all $t\geq 0$ and $u\in\R^{N}_+$,
$$\E_u[\bar{\U}(t)]\leq \bar{u}e^{t(c\alpha^*-\alpha)},$$
where $\alpha^*=\max\limits_{k\in\cN}\sum_{j\in\cN}W_{j\rightarrow k}$.
In particular, if $\alpha>c\alpha^*$, then the process goes extinct.
\end{thm}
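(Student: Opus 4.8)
The plan is to track the expected average potential $\E[\bU(t)]$ directly via the generator, applying $\mathcal{L}$ to the test function $f(u)=\bar{u}$. First I would compute $\mathcal{L}f(u)$ for this $f$: the partial derivatives are $\partial f/\partial u_i = 1/N$, so the drift terms give $-\lambda\sum_i \frac{1}{N}(u_i-\bar u) = 0$ (the electrical-synapse term is conservative for the average) and $-\alpha\sum_i \frac{1}{N}u_i = -\alpha\bar u$. For the jump term, when neuron $i$ spikes the average changes by $f(\Delta_i(u))-f(u) = \frac{1}{N}(E_i - u_i)$, as already recorded in the proof of Theorem \ref{existenceofU}, so this contributes $\sum_i \varphi(u_i)\frac{1}{N}(E_i-u_i)$. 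Hence
\begin{equation*}
\mathcal{L}f(u) = \frac{1}{N}\sum_{i\in\cN}\varphi(u_i)(E_i - u_i) - \alpha\bar u.
\end{equation*}

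Next I would bound the jump contribution. Since $\varphi(0)=0$ and $\varphi$ is $c$-Lipschitz, $\varphi(u_i)\le c\,u_i$, so $\varphi(u_i)(E_i-u_i)\le \varphi(u_i)E_i \le c\,u_i E_i$ (using $\varphi\ge 0$ and dropping the $-\varphi(u_i)u_i\le 0$ term). Summing, $\frac{1}{N}\sum_i \varphi(u_i)(E_i-u_i)\le \frac{c}{N}\sum_i u_i E_i \le \frac{c}{N}\big(\max_i E_i\big)\sum_i u_i$. Now one must reconcile $\max_i E_i = \max_i \sum_{j}W_{i\to j}$ with the constant $\alpha^* = \max_k \sum_j W_{j\to k}$ appearing in the statement; these differ by the direction of the sum, so I would instead bound more carefully by writing $\sum_i \varphi(u_i)E_i = \sum_i \varphi(u_i)\sum_{j}W_{i\to j} = \sum_{j}\sum_i W_{i\to j}\varphi(u_i) \le \sum_j \big(\max_k\sum_i W_{i\to k}\big)\varphi(u_j)\cdot$ — more cleanly, $\sum_i \varphi(u_i) E_i \le \big(\max_i\sum_j W_{i\to j}\big)\sum_i\varphi(u_i)$ handles one orientation while the bound $\varphi(u_i)\le c u_i$ combined with reindexing the double sum $\sum_{i,j}W_{i\to j}\varphi(u_i)$ by the \emph{incoming} weights gives the $\alpha^*$ form; the upshot is $\mathcal{L}f(u)\le (\tfrac{\alpha^* c}{N}\,\text{-normalization})\bar u - \alpha\bar u$, and after fixing the normalization correctly one gets $\mathcal{L}f(u)\le (\alpha^* c - \alpha)\bar u = (\alpha^* c-\alpha)f(u)$.

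Then I would invoke Dynkin's formula (justified because, by Theorem \ref{existenceofU} and the moment bounds established in its proof, $\E[\bU(t)]<\infty$ for all $t$ and the relevant local martingale is a true martingale): setting $g(t)=\E[\bU(t)]$, we obtain $g(t) = \bar u + \int_0^t \E[\mathcal{L}f(\U(s))]\,ds \le \bar u + (\alpha^* c-\alpha)\int_0^t g(s)\,ds$, and Grönwall's inequality yields $g(t)\le \bar u\, e^{t(\alpha^* c-\alpha)}$, which is the claimed bound. For the ``in particular'' clause: if $\alpha>\alpha^* c$ then $\E[\bU(t)]\to 0$ exponentially, so $\E[\int_0^\infty \bU(t)\,dt]<\infty$, hence $\int_0^\infty \bU(t)\,dt<\infty$ a.s.; since after each spike of neuron $i$ the average jumps up by $\tfrac{1}{N}(E_i-U_i(t^-))$ and between spikes $\bU$ decays at rate at least $\alpha$ (from \eqref{determotion}, averaging gives $\frac{d}{dt}\bU = -\alpha\bU$), infinitely many spikes would force $\int_0^\infty \bU\,dt=\infty$ on that event, a contradiction — so there are only finitely many spikes a.s., i.e.\ the process goes extinct.

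The main obstacle I anticipate is purely bookkeeping rather than conceptual: getting the constant exactly right, i.e.\ matching $\frac{1}{N}\sum_i\varphi(u_i)(E_i-u_i)\le (\alpha^*c)\bar u$ with the asymmetric definition $\alpha^*=\max_k\sum_j W_{j\to k}$ (note this is a \emph{column} sum of the weight matrix, not the row sum $E_i$). The clean route is to expand the double sum and bound $\sum_{i}\varphi(u_i)\sum_{j} W_{i\to j}$; but to land on $\alpha^*$ as defined one wants to bound $\frac{1}{N}\sum_{i,j}W_{i\to j}\varphi(u_i)$ and, using $\varphi(u_i)\le c u_i$ together with $\sum_i W_{i\to j}\le \alpha^*$ for each fixed $j$ is not quite it either — one needs $\sum_j W_{i\to j}$ bounded, so most likely the intended reading is $\alpha^*=\max_i\sum_j W_{i\to j}$ (a harmless index-name clash in the paper), and with that reading the estimate $\frac{1}{N}\sum_i\varphi(u_i)E_i \le \frac{c}{N}\alpha^*\sum_i u_i = c\alpha^*\bar u$ is immediate. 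The secondary subtlety is the integrability needed to promote the Dynkin local martingale to a genuine martingale, which the linear growth of $\varphi$ and the a priori moment bounds from Theorem \ref{existenceofU} supply by a standard localization argument.
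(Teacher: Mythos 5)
Your proof of the main inequality is essentially the paper's own argument: the paper plugs the coordinate projections $\pi_i$ into the generator and sums over $i$, which is exactly the same computation as applying $\mathcal{L}$ to $f(u)=\bar u$; both routes then use $0\le\varphi(u_i)\le c\,u_i$, drop the nonpositive term $-\varphi(u_i)u_i$, observe that the $\lambda$-terms cancel in the average, and conclude with Gr\"onwall. Your remark about the orientation of the weight sums is also on target: the double sum that actually appears is $\sum_i\varphi(u_i)\sum_j W_{i\rightarrow j}$, so the constant produced by the argument (the paper's included, since its own display contains $\sum_j W_{j\rightarrow i}\E[\varphi(\U_j(t))]$ summed over $i$) is $\max_i\sum_j W_{i\rightarrow j}$, the maximal \emph{outgoing} weight, rather than the literal $\alpha^*=\max_k\sum_j W_{j\rightarrow k}$; reading $\alpha^*$ as the outgoing maximum is the right repair, and the paper makes the same silent identification. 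Your attention to Dynkin/localization, using the moment bounds from the proof of Theorem \ref{existenceofU}, is more careful than the paper's formal differentiation of $\E[\U_i(t)]$.

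The one genuine gap is in your argument for the ``in particular'' clause (which the paper asserts without proof). You claim that infinitely many spikes would force $\int_0^\infty\bU(t)\,dt=\infty$ because at each spike the average ``jumps up by $\frac1N(E_i-\U_i(t^-))$''; but this increment can be negative (the spiking neuron may lose more potential than it redistributes, e.g.\ when $\U_i(t^-)>E_i$), and even when positive the jumps could be summable, so the implication does not follow from jump sizes. The correct mechanism is the spiking \emph{intensity}, not the jump: the total rate is $\sum_i\varphi(\U_i(t))\le cN\bU(t)$, so by Tonelli the expected total number of spikes is at most $cN\,\E\big[\int_0^\infty\bU(t)\,dt\big]\le cN\bar u/(\alpha-\alpha^* c)<\infty$ when $\alpha>\alpha^* c$, hence the number of spikes is finite almost surely. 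With that one-line replacement your proof of the extinction statement is complete; the rest stands as written.
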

\begin{proof}
For each $i\in\cN$, plugging $f=\pi_i$ in (\ref{geradorU}), where $\pi_i$ is the projection onto the $i$-th coordinate,  we have
\begin{align*}
{\frac{d}{dt} }\E_u[\U_i(t)]=\sum_{j\in\cN}W_{j\rightarrow i}\E_u[\varphi(\U_j(t))] -\E_u[\U_i(t)\varphi(\U_i(t))] \\
\quad {} -\alpha\E_u[\U_i(t)]+
\lambda\E_u[\U_i(t)-\bar{\U}(t)].
\end{align*}
Summing over all $i\in\cN$ and then using that $\varphi$ is a non-negative $c$-Lipschitz function such that $\varphi(0)=0$, it follows that
\begin{eqnarray*}
{\frac{d}{dt} }\E_u[\bar{\U}(t)]\leq (c\alpha^*-\alpha)\E_u[\varphi(\bar{\U}(t))].
\end{eqnarray*}
Therefore applying the Grownwall's lemma to the inequality above we conclude the proof.
\end{proof}

Even assuming that the firing rate $\varphi$ satisfies only the Assumption \ref{a1}, we claim that for any fixed numbers of neurons, the presence of leak currents is a necessary and sufficient condition for the extinction of the process.
In fact, we shall prove a stronger result.  It states that, if there is leakage, there will be only a finite numbers of spikes eventually almost surely.
On the other hand, it is shown that, excluding the trivial initial configuration, the system is Harris-ergodic when there is no leakage.
In particular, this results generalize Theorem~\ref{EvaStopSpik} above.

In order to state our main result, we need to introduce some extra notation.
For each neuron $i\in\cN$, let $\T^i_1=\inf\{s>0 : \U_i(s)=0\}$ be the first spiking time of neuron $i$ and for each $k\geq 2$, let $\T^i_k=\inf\{s>T^i_{k-1} : \U_i(s)=0\}$ be the $k$-th spiking time of neuron $i$. Then, the first and the $k$-th spiking time of the system are defined respectively by
\begin{equation}
\label{defdisp}
T_1=\inf\limits_{i\in\cN}T^i_1\quad \mbox{and} \quad T_k=\inf\limits_{i\in\cN,m\geq1}\{T^i_m>T_{k-1}\}, \ \ k\geq 2.
\end{equation}
Our main theorem is given below.
\begin{thm}
\label{UstopSpiking}
Let $(\U(t))_{t\geq 0}$ be the Markov process starting whose the infinitesimal generator is given by (\ref{geradorU}) and $(\T_k)_{k\geq 1}$ be as defined in (\ref{defdisp}). Assume that the spiking rate $\varphi$ is differentiable at $0$ and satisfies 
Assumption \ref{a1}.  Then for any $\alpha>0$, $\lambda\geq 0$ and $u\in \R_+^N$,
$$
\P_u\Bigl(	\sum_{k\geq 1}\one{\{T_k<\infty\}}<\infty\Bigr)=1.
$$
\end{thm}

\begin{cor}
Under the same hypothesis of Theorem \ref{UstopSpiking}, for all $i\in\cN$ and $u\in \R_+^N$, it holds
\begin{equation*}
\lim_{t\rightarrow +\infty}\U_i(t)=0 \quad \P_u\mbox{-a.s.}
\end{equation*}
In particular, the delta of Dirac  at point $0^N$, $\delta_{0^N}$, is the unique invariant measure for the process in the presence of currents.
\end{cor}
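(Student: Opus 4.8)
The plan is to combine Theorem~\ref{UstopSpiking} with an elementary analysis of the deterministic flow~\eqref{determotion}. Let $K=\sum_{k\ge1}\one{\{T_k<\infty\}}$, which is finite almost surely by Theorem~\ref{UstopSpiking}, and let $T_\star$ be the time of the last spike of the system (set $T_\star=0$ if $K=0$). For $t\ge T_\star$ the trajectory $(\U(t))_{t\ge0}$ has no jumps, so each coordinate obeys~\eqref{determotion}. Summing~\eqref{determotion} over $i\in\cN$ and dividing by $N$, the electrical--synapse term has zero sum and cancels, leaving ${d\over dt}\bU(t)=-\alpha\,\bU(t)$ for $t\ge T_\star$; hence $\bU(t)=\bU(T_\star)e^{-\alpha(t-T_\star)}$. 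Since the process takes values in $\R_+^N$ we have $\U_i(t)\le N\bU(t)$ for every $i\in\cN$, and therefore
\[
0\le \U_i(t)\le N\,\bU(T_\star)\,e^{-\alpha(t-T_\star)},\qquad t\ge T_\star .
\]
As $\alpha>0$, letting $t\to\infty$ yields $\lim_{t\to\infty}\U_i(t)=0$ almost surely for each $i\in\cN$, which is the first assertion. (Nonnegativity of the flow after $T_\star$ is automatic; alternatively, $\U_i=0$ forces ${d\over dt}\U_i=\lambda\bU\ge0$.)

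For the statement on invariant measures, first observe that $\delta_{0^N}$ is invariant: if $\U(0)=0^N$ then $\varphi(\U_i(0))=\varphi(0)=0$ for all $i$, so no spike ever occurs, and $0^N$ is a fixed point of~\eqref{determotion}, whence $\U(t)=0^N$ for all $t\ge0$. Conversely, let $\mu$ be any invariant probability measure and run the process with $\U(0)\sim\mu$. The convergence $\U(t)\to 0^N$ established above holds for every deterministic starting configuration, so, conditioning on $\U(0)=u$ and integrating over $\mu$, it holds almost surely also under $\U(0)\sim\mu$. Almost sure convergence implies convergence in distribution, hence $\mathrm{Law}(\U(t))\Rightarrow\delta_{0^N}$ as $t\to\infty$. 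On the other hand, by stationarity $\mathrm{Law}(\U(t))=\mu$ for every $t\ge0$, and so $\mu=\delta_{0^N}$.

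I do not expect a genuine obstacle here: Theorem~\ref{UstopSpiking} already does the hard part, reducing the problem to the long--time behaviour of a linear ODE whose origin is globally attracting exactly because $\alpha>0$, and then to the routine fact that an invariant law whose forward evolution converges weakly to a point mass must coincide with that point mass. The two points worth a sentence of care are the passage from deterministic to $\mu$--random initial data in the convergence statement, and the observation that the convergence in the first part holds along all real times $t$ (not just along the spiking times), which is guaranteed by the explicit exponential bound displayed above.
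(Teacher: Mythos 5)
Your argument is correct and is essentially the deduction the paper intends but leaves unwritten: Theorem~\ref{UstopSpiking} gives a finite last spike time, after which the explicit flow (compare your ODE for $\bU$ with the paper's formula \eqref{equacaoU_i(t)}) decays exponentially because $\alpha>0$, and the uniqueness of the invariant law follows from the standard fact that a stationary law equal to $\mathrm{Law}(\U(t))$ for all $t$ must coincide with its weak limit $\delta_{0^N}$. The two points you flag (transferring the a.s.\ convergence from deterministic to $\mu$-distributed initial data, and convergence along all real times) are exactly the right details to mention, and your treatment of them is sound.
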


It remains to analyse what happens in the long-run behavior of the system in the absence of the leakage. For that sake, we shall assume also that each neuron influences and it is influenced by at least one other neuron. Formally, 
\begin{ass}
\label{ass:grafoInter}
For each neuron $i\in\cN$ there exist at least two neurons $j,j\p\in\cN\setminus\{i\}$ such that $W_{i\to j}>0$ and $W_{j\p \to i}>0$.
\end{ass}
In absence of leakage and under Assumption \ref{ass:grafoInter} we have the following result.
\begin{thm}
\label{med_inv_semleaking}
Let $(\U(t))_{t\geq 0}$ be the Markov process  whose the infinitesimal generator is given by (\ref{geradorU}) with $\alpha=0.$ 
Under Assumptions \ref{a1} and \ref{ass:grafoInter}, for all $u\in \R_+^N\setminus\{0^N\}$ the process $(\U(t))_{t\geq 0}$ with $\U(0)=u$ is Harris-ergodic. In particular, the process $(\U(t))_{t\geq 0}$ does not go extinct.
\end{thm}

\section{Proof of Theorem \ref{UstopSpiking}}
\label{ProvaUstop}
First of all, observe that, from the equation (\ref{determotion}), for any time $t \in [T_n,T_{n+1})$,
\begin{equation}
\label{equacaoU_i(t)}
\U_i(t) =\U_i(\T_n)e^{-(\alpha +\lambda)(t-\T_n)}+ \bU(\T_n)e^{-\alpha (t-\T_n)}\big(1-e^{-\lambda(t-T_n)}\big).
\end{equation}
We shall explore this equation many times. We start giving a lower bound to the probability that no spikes occur when the system starts from initial configurations bounded.
\begin{prop}
\label{tau1infin<r}
Let $(\U(t))_{t\in \R}$ be the Markov process whose the generator is given by (\ref{geradorU}) and $\T_1$ as defined in (\ref{defdisp}). Suppose that the spiking rate $\varphi$ is differentiable at $0$ and satisfies 
Assumption \ref{a1}. For any $r>0$, if $\alpha>0$, then 
$$
\inf_{u\in \bar{B}(0,r)}\P_u(\T_1=\infty)\geq e^{-\frac{N}{\alpha}\int_{0}^{r}{\frac{\varphi(v)}{ v}}dv} >0,
$$
where $\bar{B}(0,r)=\{u\in \R_+^N: ||u||\leq r\}$ is the ball of radius $r$ and center $0^N.$ 
\end{prop}
\begin{proof}
By the equation (\ref{equacaoU_i(t)}), we have, for all $0\leq t<\T_1$, the following inequality
\begin{eqnarray}
\label{2r}
\|\U(t)\| 
\le    r[e^{-(\alpha +\lambda)t} + e^{-\alpha t} (1-e^{-\lambda t})]
=  re^{-\alpha t}.
\end{eqnarray}
Using the inequality  above and the non-decreasing assumption on $\varphi$, we have that for any $u\in \bar{B}(0,r)$
\begin{eqnarray}
\P_u(\T_1>t) &\geq &  
\exp\left\lbrace -\int_0^t N \varphi(\|\U(s)\|)ds \right\rbrace \nonumber \label{tau>t_U<r}
		\ge \exp\left\lbrace {-\frac{N}{\alpha}\int_{re^{-\alpha t}}^{r}{\frac{\varphi(v)}{v}}dv} \right\rbrace. \label{tau>t_U<r2}
\end{eqnarray}
Therefore, taking $t$ to infinite using both Assumption \ref{a1} and the differentiability at 0 of the spiking rate $\varphi$  the result follows.
\end{proof}
We now prove that there exists a sufficiently large radius $r$ such that for any $u\notin \bar{B}(0,r)$ the process returns $\P_u$-a.s to the ball $\bar{B}(0,r)$ in a finite time. For any measurable set $A$ of $\R_+^N$, we define $T_A=\inf\{t>0:\U(t)\in A\}$. 
\begin{prop}
\label{T1>delta_U>r}
Let $(\U(t))_{t\in \R}$ be the Markov process whose the generator is given by (\ref{geradorU}) and assume that $\varphi$ satisfies Assumption \ref{a1}. For any $\alpha>0$, there exists a finite constant $r=r(\alpha,N,\varphi,{\bf W})$ such that for all $u\notin T_{\bar{B}(0,r)}$,
$$
\E_u\big(T_{\bar{B}(0,r)}\big)\leq V(u)<\infty,
$$ 
where for any $u\in\R_+^N$, $V(u)=\sum_{i\in \cN}u_i.$
\end{prop}
\begin{proof}
Indeed, the function $V$ is in the domain of the generator $\mathcal{L}$ defined in \eqref{geradorU} and for any $u\in \R_+^N,$
\begin{eqnarray}
\label{fUtil}
\mathcal{L}V(u)=-\alpha V(u)+\sum_{i\in\cN}\varphi(u_i)(m_i-u_i),
\end{eqnarray}
where for each $i\in\cN$, $m_i=\sum_{j\in \cN}W_{i\to j}.$
Since for any $u\in \R_+^N$, 
$$
\sum_{i\in\cN}\varphi(u_i)(m_i-u_i)\leq \sum_{i\in\cN: u_i\leq m_i}\varphi(m_i)(m_i-u_i)\leq (N-1)\varphi(m)m, 
$$
where $m=\max\{m_i:i\in\cN\},$ then taking $r=\alpha^{-1}((N-1)\varphi(m)m+1)$ we have that for any $u\notin\bar{B}(0,r)$, $\mathcal{L}V(u)<-1.$ 

Now, defining $M_t=V(\U(t))-V(\U(0))-\int_{0}^t\mathcal{L}V(\U(s))ds$, we deduce by applying Dynkin's formula that for any $u\in\R_+^N$,
\begin{eqnarray}
\label{martingale}
\E_u\big[V\big(\U({t\wedge T_{\bar{B}(0,r)}}\big)\big]=V(u)+\E_u\Big[\int_{0}^{t\wedge T_{\bar{B}(0,r)}}\mathcal{L}V(\U(s))ds\Big].
\end{eqnarray}
Now notice that for any $u\notin T_{\bar{B}(0,r)}$, we have that for all $t\geq 0$,
$$
\E_u\Big[\int_{0}^{t\wedge T_{\bar{B}(0,r)}}\mathcal{L}V(\U(s))ds\Big]\leq -\E_u[t\wedge T_{\bar{B}(0,r)}],
$$ 
which, together with \eqref{martingale}, implies that $\E_u[t\wedge T_{\bar{B}(0,r)}]\leq V(u)$, for  $u\notin T_{\bar{B}(0,r)}$ and all $t\geq 0.$ Since we have $\E_u[t\wedge T_{\bar{B}(0,r)}]\geq t\P_u(T_{\bar{B}(0,r)}>t),$ it follows, taking $t\to\infty$, that $\P_u(T_{\bar{B}(0,r)}<\infty)=1$ which implies that $\E_u[T_{\bar{B}(0,r)}]\leq V(u)$ for all $u\notin T_{\bar{B}(0,r)}$, concluding the proof.
\end{proof}
The proof of Theorem \ref{UstopSpiking} is now easy.
\begin{proof}[of Theorem \ref{UstopSpiking}] 
Indeed, Proposition \ref{T1>delta_U>r} implies that the returning times to $\bar{B}(0,r)$ is $\P_u$-a.s finite for any $u\notin \bar{B}(0,r)$, while Proposition \ref{tau1infin<r} provides a positive lower bound  for the probability of never spike again no matter the configuration the process reaches in $\bar{B}(0,r)$ at each returning time to $\bar{B}(0,r).$ Thus by a Borel-Cantelli-like argument we conclude the proof. 
\end{proof}

\section{Proof of Theorem \ref{med_inv_semleaking}}
\label{ProvaExisteMedInv}

To simplify the proof of Theorem \ref{ProvaExisteMedInv} we shall split it  into several steps. The main part of the argument is to find a {\it recurrent regeneration set} $B$ in sense that

\begin{enumerate}
\item if we write $T^+_B=\inf\{t>T_B:\U(t)\in B\}$,
then $\sup_{u\in B}\E_u(T^+_B)<\infty,$ 
and
\item there exist $T>0$, $\epsilon>0$ and a probability measure $\nu$ on $\R^N_{+}$ such that
$$P_{T}(u,A):=\P_u(\U(T)\in A)\geq \epsilon \nu(A), \ u\in B, $$
for all measurable set $A\in \mathcal{B}(\R^N_{+}).$
\end{enumerate}
Condition (ii) is often called {\em localized Doeblin condition}.
Markov processes with a recurrent regeneration set are called {\it Harris-ergodic.} For such Markov Processes an invariant probability measure always exits
and the law of the processes converges in total variation to its invariant measure as the time diverges, see, among others, \cite{MeyTwe:93} or \cite{Asmussen:03}.

In what follows, for any positive real number $a>0$ we use the notation $R_a(x)$, meaning that there exists a constant $l>0$  such that $|R_a(x)|\leq la$ for all $x.$ We call a function of order $a$ any function $R$ satisfying such condition. Note that we are not specifying the domain in which the function $R$ is defined on. For each $\epsilon>0$ define the following event
$$
M_{\epsilon}=\{T_1<\epsilon, T_k-T_{k-1}<\epsilon, \, \, k=2,\cdots, N \},
$$
and consider again the event
$S=\{S_k=k, \, k=1,\ldots,N\}.$
The first lemma below says that, conditioning on the event $M_{\epsilon}\cap S$, when $\epsilon$ is sufficiently small the process evolves, modulo an error of small order, as in the case without electrical synapses ($\lambda=0$). Before stating this lemma we need to introduce a finite sequence of potential configurations.
Consider the sequence $(v_k)_{k=0,\ldots, N}$ with $v_k\in \R_{+}^N$ given by
\begin{equation}
\label{defv0}
v_0=\Big(\sum_{j=2}^{N}W_{j\rightarrow 1},\sum_{j=3}^{N}W_{j\rightarrow 2}, \ldots, W_{N \rightarrow N-1},0\Big),
\end{equation}
and for $1\leq k \leq N,$  $(v_k)_k=0$ and for $i\neq k,$
$$(v_k)_i=\Big(\sum\limits_{j=i+1}^k W_{j\rightarrow i}\Big)\one_{\{i<k\}}+ \Big((v_0)_i+ \sum_{j=1}^{k}W_{j\rightarrow i}\Big)\one_{\{i>k\}}.$$

\begin{lema}
\label{UiamenosdeEpsilon}
Fix $\delta>0.$ If $\U(0)=u\in B(v_0,\delta)$, then conditioning on $M_{\epsilon}\cap S$, for each $k=1,\cdots, N,$ the following equalities hold:
\begin{enumerate}
\item $\U_i(T_k)=(v_k)_i+\sum\limits_{r=i+1}^{k}\lambda(T_r-T_{r-1})d_i(r-1)+R_{\delta\epsilon}(T^k_1,u)+R_{\epsilon^2}(T_1^k,u),$ for any  $i<k;$
\item $\U_i(T_k)=(v_k)_i+\sum\limits_{r=1}^{k}\lambda(T_r-T_{r-1})d_i(r-1)+R_{\delta}(u)+R_{\delta\epsilon}(T^k_1,u)+R_{\epsilon^2}(T_1^k,u),$
for any $i>k$;
\item $\bU(T_k)=\bar{v}_k+R_{\delta}(u)+R_{\epsilon}(T_1^k,u),\ \mbox{if} \ k<N \ \mbox{and} \ \bU(T_N)=\bar{v}_0+R_{\epsilon}(T_1^N,u),$
\end{enumerate}
where $d_i(m)=\bar{v}_m-(v_m)_i$, $T_0=0$ and $T_1^k=(T_1,\ldots,T_k).$ Furthermore, all the partial derivatives of the remainder functions $R_{\delta\epsilon}(T_1^k,u)$, $R_{\epsilon^2}(T_1^k,u)$ above are either of order $\delta$ or $\epsilon.$
\end{lema}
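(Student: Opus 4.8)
The plan is to unwind the explicit trajectory formula (\ref{equacaoU_i(t)}) across the first $N$ spikes, using the event $A_\varepsilon\cap S$ to control both the identity of the spiking neuron at each step (it is neuron $k$ at the $k$-th spike, by $S$) and the lengths of the interspike intervals (each $T_r-T_{r-1}$ lies in an interval of length $\varepsilon$, by $A_\varepsilon$, so in particular $T_1^k$ has coordinates of order $\varepsilon$). The proof is an induction on $k$. For the base case $k=1$, neuron $1$ spikes at time $T_1\in(0,\varepsilon)$, so between $0$ and $T_1^-$ the system follows the deterministic flow (\ref{equacaoU_i(t)}) for a time of order $\varepsilon$; expanding the exponentials $e^{-(\alpha+\lambda)T_1}$ and $e^{-\alpha T_1}$ to first order gives $\U_i(T_1^-)=u_i+R_\varepsilon(T_1,u)$, and since $u\in B(v(0),\delta)$ we may replace $u_i$ by $v_i(0)$ at the cost of an $R_\delta(u)$ term; finally applying $\Delta_1$ resets coordinate $1$ to $0$ and adds $W_{1\to i}$ to the others, which is exactly $v_i(1)$. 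This matches (i)--(iii) at $k=1$ (with empty sums), after checking that the $\alpha=0$-type bookkeeping is absorbed into the remainders.

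For the inductive step, assume the three formulas hold at step $k-1$. Between $T_{k-1}$ and $T_k^-$ the process again follows (\ref{equacaoU_i(t)}) for the time increment $T_k-T_{k-1}$, which by $A_\varepsilon$ is of order $\varepsilon$. Writing $e^{-(\alpha+\lambda)(T_k-T_{k-1})}=1-(\alpha+\lambda)(T_k-T_{k-1})+R_{\varepsilon^2}$ and similarly for the $\alpha$-exponential, and plugging in $\U_i(T_{k-1})$ and $\bU(T_{k-1})$ from the induction hypothesis, the deterministic drift over this interval contributes a term $-\lambda(T_k-T_{k-1})\bigl(\U_i(T_{k-1})-\bU(T_{k-1})\bigr)$ plus an $\alpha$-leak term; the leading part of $\U_i(T_{k-1})-\bU(T_{k-1})$ is $v_i(k-1)-\bar v(k-1)=-d_i(k-1)$, so this produces precisely the new summand $\lambda(T_k-T_{k-1})d_i(k-1)$ in the sums of (i) and (ii), while everything else — the leak term, the $R_{\varepsilon^2}$ from the exponential expansion, products of $\varepsilon$-order increments with $\delta$- or $\varepsilon$-order remainders — is folded into $R_{\delta\varepsilon}(T_1^k,u)+R_{\varepsilon^2}(T_1^k,u)$. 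Then apply $\Delta_k$: coordinate $k$ is reset to $0$, and for $i\ne k$ one adds $W_{k\to i}$, which is exactly the increment that turns $v_i(k-1)$ into $v_i(k)$ (here one must split cases $i<k-1$, $i=k-1$, $i>k$ and compare with the definition of $v_i(k)$ and $v(0)$, noting that a coordinate that spiked earlier has been receiving $W_{j\to i}$ contributions and one that has not yet spiked carries its $v(0)$-piece). The formula for $\bU(T_k)$ follows by averaging the coordinate formulas, or more cheaply by iterating the update $\bU(T_k)=\bU(T_k^-)+\frac1N(E_k-\U_k(T_k^-))$ derived in the proof of Theorem \ref{existenceofU}; the special form at $k=N$ comes from the fact that after all $N$ neurons have spiked the total has returned, up to order $\varepsilon$, to $N\bar v(0)$.

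The claim about partial derivatives of the remainder functions is handled in parallel: each remainder is built from the smooth functions $e^{-(\alpha+\lambda)s}$, $e^{-\alpha s}$ composed with affine maps in $u$ and the $T_r$'s, so differentiating in $u_j$ or in a time variable keeps the same order, and products of a bounded smooth factor with an $\varepsilon$- or $\delta$-order factor remain of that order; one just carries this through the same induction. The main obstacle I anticipate is purely combinatorial bookkeeping rather than anything deep: keeping the three index regimes ($i<k$, $i=k$, $i>k$) straight through the $\Delta_k$ updates and verifying that the telescoping of the $W_{j\to i}$ contributions reproduces the somewhat intricate definition of $v_i(k)$ and $v(0)$ exactly, while simultaneously tracking which error terms are $R_\delta$, which are $R_{\delta\varepsilon}$, and which are $R_{\varepsilon^2}$ — in particular making sure that the $R_\delta(u)$ term appears only for coordinates $i>k$ (those still carrying initial-condition information) and has been fully cancelled for $i<k$ (those that have been reset at least once).
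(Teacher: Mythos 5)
Your overall route is the same as the paper's: induction on $k$, first--order expansion of the deterministic flow over each interspike interval (of length of order $\varepsilon$ on $A_{\varepsilon}$), use of $S$ to know that neuron $k$ spikes at $T_k$ and application of $\Delta_k$, replacement of $u$ by $v(0)$ at the cost of an $R_{\delta}(u)$ term, averaging to get (iii), and smoothness of the exponentials for the statement about partial derivatives. However, two steps as written do not work. First, the leak term cannot be ``folded into $R_{\delta\varepsilon}+R_{\varepsilon^2}$'': over an interval of length of order $\varepsilon$ it contributes $-\alpha(T_k-T_{k-1})\U_i(T_{k-1})+R_{\varepsilon^2}$, which is of order $\alpha\varepsilon$ with a coefficient of order one (roughly $\alpha\varepsilon\, v_i(k-1)$), not of order $\delta\varepsilon$ or $\varepsilon^2$; moreover with $\alpha>0$ the average $\bU$ is no longer constant between spikes, which spoils (iii) at the stated accuracy. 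The lemma belongs to the proof of Theorem \ref{med_inv_semleaking}, where $\alpha=0$, and the paper's computation uses exactly the flow $\U_i(t)=\bU(T_n)+(\U_i(T_n)-\bU(T_n))e^{-\lambda(t-T_n)}$; for $\alpha>0$ items (i)--(ii) are simply false at the claimed order, so you must set $\alpha=0$ rather than absorb the leak into the remainders.

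Second, your base case does not establish the statement you need at $k=1$. Item (ii) at $k=1$ reads $\U_i(T_1)=v_i(1)+\lambda T_1 d_i(0)+R_{\delta}(u)+R_{\delta\varepsilon}(T_1,u)+R_{\varepsilon^2}(T_1,u)$ --- the sum over $r$ is not empty --- whereas you only obtain $\U_i(T_1^-)=u_i+R_{\varepsilon}(T_1,u)$, burying the linear term $-\lambda T_1(u_i-\bar u)=\lambda T_1 d_i(0)+R_{\delta\varepsilon}$ inside a generic order-$\varepsilon$ remainder. This is not cosmetic: the entire point of the lemma is to separate the explicit linear dependence on $(T_1,\ldots,T_N)$ from remainders of the strictly smaller orders $\delta\varepsilon$ and $\varepsilon^2$, because Corollary \ref{UT}, Corollary \ref{jacobian} and Proposition \ref{minorisation} need the Jacobian $\lambda^N\prod_{i}\bar v(i)$ up to errors of order $\delta+\varepsilon$; a base case of the weaker form also cannot feed your own inductive step, which assumes the stronger structure at step $k-1$. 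The repair is exactly the computation you already perform in the inductive step, applied at $k=1$ with $u\in B(v(0),\delta)$ playing the role of the induction hypothesis, as in the paper: write $\U_i(T_1)=W_{1\rightarrow i}+\bU(0)+\bigl(1-\lambda T_1+R_{\varepsilon^2}(T_1)\bigr)\bigl(u_i-\bU(0)\bigr)$ and replace $u_i-\bar u$ by $-d_i(0)+R_{\delta}(u)$, which produces the explicit term $\lambda T_1 d_i(0)$ together with remainders $R_{\delta}(u)+R_{\delta\varepsilon}(T_1,u)+R_{\varepsilon^2}(T_1,u)$.
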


\begin{proof}
The proof is given by induction on $k$. On the event $M_{\epsilon}\cap S$, we have that $\U_1(T_1)=0$ and for each $i=2,\ldots, N$ and $\U(0)=u\in B(v_0,\delta),$
\begin{eqnarray*}
\U_i(T_1)&=& W_{1\rightarrow i} + \bU(0) + (1-\lambda T_1 +R_{\epsilon^2}(T_1))\big((v_0)_i+R_{\delta}(u)-\bar{\U}(0)\big)\\
&=& (v_1)_i+\lambda T_1d_i(0)+R_{\delta}(u)+R_{\epsilon\delta}(T_1,u) + R_{\epsilon^2}(T_1,u),
\end{eqnarray*}
where in the first equality we have used the expansion series of the exponential function.

Thus, from the expression of $\U_i(T_1)$ above,  we conclude that
\begin{eqnarray*}
\bU(T_1)&=&\bar{v}_1+ \lambda T_1 \frac{1}{N} \sum_{i=2}^Nd_i(0) +R_{\delta}(u)+ R_{\epsilon\delta}(T_1,u)+R_{\epsilon^2}(T_1,u)\\
&=&\bar{v}_1 +R_{\delta}(u)+R_{\epsilon}(T_1,u).
\end{eqnarray*}
In addition, it is easy to check that all remainders functions above have partial derivatives with respect to $T_1$ and all of them are either $R_{\delta}$ or $R_{\epsilon}$ functions. Therefore (i), (ii) and (iii) it is verified for $k=1.$

Now, suppose that (i), (ii) and (iii) hold for some fixed $1<k<N$. As before, on the event $M_{\epsilon}\cap S $, we have   $\U_{k+1}(T_{k+1})= 0$ and, for $i<k+1$, using the inductive hypothesis we have that $\U_i(T_{k+1})$ is equal to
\begin{align*}
W_{k+1\rightarrow i}+\bU(T_k)+(1-\lambda(T_{k+1}-T_k)+R_{\epsilon^2}(T_{k-1},T_k))(\U_i(T_k)-\bU(T_k))
\end{align*}
which can be rewritten as
\begin{align*}
(v_{k+1})_i +\sum_{r=i+1}^{k+1}\lambda(T_r-T_{r-1})d_i(r-1)+ R_{\delta \epsilon}(T_1^{k+1},u) +R_{\epsilon^2}(T_1^{k+1},u).
\end{align*}
Using the same argument for the case when $i>k+1$, we get \textit{(ii)} for $k+1$. Using  again the inductive hypothesis and looking at the expression written in the first equality of the membrane potential, it is readily seen that the remainder functions possesses partial derivatives with to $T_l$, for $l=1,\ldots, k+1$ and they are either of order $\delta$ or $\epsilon.$

Finally, summing $\U_i(T_{k+1})$ over all neurons $i=1,\ldots, N$
\begin{eqnarray*}
\bU(T_{k+1})&=& \bar{v}_{k+1}+R_{\epsilon}(T_1^{k+1},u)+R_{\delta}(u).
\end{eqnarray*}
\end{proof}
Note that $v(T_N)=v(0)$, thus, from the previous lemma it follows 
\begin{cor}
\label{UT}
Under the same assumptions of Lemma \ref{UiamenosdeEpsilon}, if $T=N\epsilon<T_{N+1}$ then for each $1\leq i\leq N$,  $\U_i(T)$ can be rewritten as
$$
(v_0)_i+\lambda(T-T_N)d_i(0) + \sum\limits_{r=i+1}^{N}\lambda(T_r-T_{r-1}) d_i(r-1)+R_{\delta\epsilon}(T_1^N,u)+R_{\epsilon^2}(T_1^N,u).
$$
\end{cor}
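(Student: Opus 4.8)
The plan is to read off the state of the system at the last spike $T_N$ from Lemma~\ref{UiamenosdeEpsilon} and then let the deterministic flow \eqref{equacaoU_i(t)} (here with $\alpha=0$, since Theorem~\ref{med_inv_semleaking} concerns the leakless regime) act over the short remaining interval $[T_N,T]$. First I would note that on $A_{\varepsilon}\cap S$ the $N$-th spike is that of neuron $N$, so $\U_N(T_N)=0=v_N(0)$, and that, because $v(N)=v(0)$, item (i) of Lemma~\ref{UiamenosdeEpsilon} at $k=N$ reads
$$
\U_i(T_N)=v_i(0)+\sum_{r=i+1}^{N}\lambda(T_r-T_{r-1})d_i(r-1)+R_{\delta\varepsilon}(T_1^N,u)+R_{\varepsilon^2}(T_1^N,u),\qquad i<N,
$$
while item (iii) gives $\bU(T_N)=\bar{v}(0)+R_{\varepsilon}(T_1^N,u)$. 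On $A_{\varepsilon}$ one has $(N-1)\varepsilon<T_N<N\varepsilon=T$, hence $T-T_N$ is of order $\varepsilon$; likewise each interspike gap $T_r-T_{r-1}$ is at most $2\varepsilon$, so the displayed sum is of order $\varepsilon$.

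Next I would substitute $n=N$, $t=T$ into \eqref{equacaoU_i(t)} — valid on the event considered, where no further spike occurs during $(T_N,T]$ — and rearrange it to
$$
\U_i(T)=\U_i(T_N)-\bigl(1-e^{-\lambda(T-T_N)}\bigr)\bigl(\U_i(T_N)-\bU(T_N)\bigr).
$$
Since $T-T_N$ is of order $\varepsilon$, $1-e^{-\lambda(T-T_N)}=\lambda(T-T_N)+R_{\varepsilon^2}$, so the entire last factor gets weighted by an $O(\varepsilon)$ quantity; and by the first step $\U_i(T_N)-\bU(T_N)=\bigl(v_i(0)-\bar{v}(0)\bigr)+(\text{order }\varepsilon)=-d_i(0)+(\text{order }\varepsilon)$, the ``order $\varepsilon$'' part including the only stray $R_{\varepsilon}$ term (the one coming from $\bU(T_N)$), which therefore becomes $R_{\varepsilon^2}$ after that multiplication. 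Plugging in the expression for the leftover $\U_i(T_N)$ — which at $k=N$ carries no $R_{\delta}(u)$ term — the linear term $\lambda(T-T_N)d_i(0)$ emerges from $-\bigl(1-e^{-\lambda(T-T_N)}\bigr)\bigl(\U_i(T_N)-\bU(T_N)\bigr)$, and every remaining contribution is a product of an $O(\varepsilon)$ factor ($\lambda(T-T_N)$ or $R_{\varepsilon^2}$) with something of order $1$, $\varepsilon$, $\delta\varepsilon$ or $\varepsilon^2$, hence is absorbed into $R_{\delta\varepsilon}(T_1^N,u)+R_{\varepsilon^2}(T_1^N,u)$ (using $\varepsilon<1$, so that order $\delta\varepsilon^2$ is in particular order $\delta\varepsilon$). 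Collecting the terms yields exactly the asserted identity; the case $i=N$ follows in the same way starting from $\U_N(T_N)=0=v_N(0)$ and $d_N(0)=\bar{v}(0)$.

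The only genuinely delicate point is this order-bookkeeping, namely checking that no isolated $R_{\delta}$ or $R_{\varepsilon}$ term survives; it works precisely because at $k=N$ all neurons have already fired, so items (i) and (iii) of Lemma~\ref{UiamenosdeEpsilon} carry no $R_{\delta}(u)$ term, and because the stray $R_{\varepsilon}$ in $\bU(T_N)$ enters only through the $O(\varepsilon)$-weighted factor above. Finally, since differentiation in $T_1^N$ commutes with the finitely many algebraic operations and the single Taylor expansion used, the last assertion of Lemma~\ref{UiamenosdeEpsilon} — remainders with partial derivatives of order $\delta$ or $\varepsilon$ — carries over to the remainder functions produced here, as will be needed when this corollary feeds into the change-of-variables computation of the transition density in the proof of Theorem~\ref{med_inv_semleaking}.
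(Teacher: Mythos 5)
Your argument is correct and is essentially the paper's intended route: the paper states the corollary as an immediate consequence of Lemma \ref{UiamenosdeEpsilon} (using $v(N)=v(0)$) with no written proof, and your computation — take the lemma at $k=N$, flow with $\alpha=0$ over $[T_N,T]$, Taylor-expand $e^{-\lambda(T-T_N)}$, and absorb the stray $R_{\varepsilon}$ from $\bU(T_N)$ after multiplication by the $O(\varepsilon)$ factor — is exactly the bookkeeping that justifies it. Your explicit remarks that at $k=N$ no $R_{\delta}(u)$ term survives and that no spike occurs in $(T_N,T]$ on the event considered are correct and, if anything, more careful than the paper.
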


\begin{obs}
\label{representation}
To simplify the notation, we shall denote the map $\gamma^0:M_{\epsilon}\rightarrow \R_+^N$ by $\gamma^0(t_1^N)=(\gamma_1^0(t_1^N),\ldots, \gamma_N^0(t_1^N))$ where, $\gamma_N^0(t_1^N)=(v_0)_N+\lambda(T-t_N)d_N(0) $ and for each $i=1,\ldots, N-1$, 
$$
\gamma_i^0(t_1^N)=(v_0)_i+\lambda(T-t_N)d_i(0) + \sum_{r=i+1}^{N}\lambda(t_r-t_{r-1})d_i(r-1).
$$
By Corollary \ref{UT}, conditioning on the event $M_{\epsilon}\cap S\cap \{T<T_{N+1}\}$, $T=N\epsilon$, we have the following representation for all $\U(0)=u\in B(v_0, \delta),$

$$\U(T)=\gamma^0(T_1^N)+R_{\delta\epsilon}(T_1^N,u)+R_{\epsilon^2}(T_1^N,u),$$ where both
$R_{\delta\epsilon}(T_1^N,u)$ and $R_{\epsilon^2}(T_1^N,u)$ are multivalued functions whose the $L_1$-norms are remainders functions of order $\delta\epsilon$ and $\epsilon^2$ respectively. 

Recall that, for all $k\in\cN$, $m_k=\sum_{j\in\cN}W_{k\to j}$ is the total amount of potential the neuron $k$ insert in the system each time it spikes. Assumption \ref{ass:grafoInter} implies that $m_k>0$ for any $k\in\cN$. Using this and the fact that $\bar{v}_k\geq m_k/N$ we get the following corollary.
\end{obs}

\begin{cor}
\label{jacobian}
For each $u\in B(v_0,\delta)$, the absolute value of the determinant of the Jacobian of map $M_{\epsilon}\ni t_1^N\mapsto \gamma_u(t_1^N)=\gamma^0(t_1^N)+R_{\delta\epsilon}(t_1^N,u)+R_{\epsilon^2}(t_1^N,u)$,  is given by
 $$
 |J\gamma_u(t_1^N)|=\lambda^N\prod_{i=1}^N \bar{v}_i +R_{\epsilon}(t_1^N,u)+R_{\delta}(t_1^N,u),
 $$ which, under the Assumption \ref{ass:grafoInter}, is different from zero for $\delta$ and $\epsilon$ small enough for all $(t_1^N)\in M_{\epsilon}$ and $u\in B(v_0, \delta)$.

\end{cor}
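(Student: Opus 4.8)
The plan is to compute $|J\gamma_u(t_1^N)|$ by first treating the unperturbed map $\gamma^0$ and then controlling the perturbation. Observe from Remark \ref{representation} that $\gamma_i^0(t_1^N)$ depends only on the variables $t_N, t_{i+1},\ldots, t_{i-1}$ through the increments $t_r - t_{r-1}$; more precisely, writing $T = N\varepsilon$ fixed, we have $\partial \gamma_i^0/\partial t_j = \lambda\big(d_i(j-1) - d_i(j)\big)$ for suitable index ranges, with the boundary terms involving $d_i(0)$ and the $t_N$-derivative coming from the $\lambda(T-t_N)d_i(0)$ summand. The key structural observation is that $\gamma_i^0$ involves $d_i(r-1)$ for $r$ ranging from $i+1$ to $N$ together with the $d_i(0)$ term, so the $i$-th row of the Jacobian has a triangular-type sparsity pattern: once we order the coordinates appropriately, the matrix $(\partial\gamma_i^0/\partial t_j)$ is (up to permutation) triangular with diagonal entries proportional to $\lambda \bar v(i)$. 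This is why the determinant of the unperturbed Jacobian comes out to $\lambda^N \prod_{i=1}^N \bar v(i)$: I would set up the explicit ordering of rows and columns that exhibits this triangular structure and read off the product of the diagonal.

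Next I would handle the remainder terms. By Lemma \ref{UiamenosdeEpsilon} and the last sentence of its statement, the functions $R_{\delta\varepsilon}(t_1^N,u)$ and $R_{\varepsilon^2}(t_1^N,u)$ have all their partial derivatives (with respect to the $t_l$) of order $\delta$ or $\varepsilon$. Hence the full Jacobian matrix is $J\gamma^0 + M$ where every entry of $M$ is $R_\delta(t_1^N,u) + R_\varepsilon(t_1^N,u)$. Expanding the determinant by multilinearity (or invoking the standard Lipschitz bound $|\det(A+M) - \det A| \le C(\|A\|,N)\,\|M\|$), every term other than $\det J\gamma^0$ picks up at least one factor from $M$ and is therefore of order $\delta$ or $\varepsilon$; collecting these gives
$$|J\gamma_u(t_1^N)| = \lambda^N \prod_{i=1}^N \bar v(i) + R_\varepsilon(t_1^N,u) + R_\delta(t_1^N,u),$$
uniformly over $(t_1^N)\in A_\varepsilon$ and $u\in B(v(0),\delta)$, since all these quantities live in a bounded region. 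Finally, since $\bar v(i) > 0$ for each $i = 1,\ldots, N$ (each $v(i)$ is a nonzero configuration — this uses the positivity of the synaptic weights entering $v(0)$, or at least that $v(0)$ is non-null, which is where the standing hypotheses matter), the leading term $\lambda^N \prod \bar v(i)$ is strictly positive, so for $\delta$ and $\varepsilon$ small enough the remainder cannot cancel it and $|J\gamma_u(t_1^N)| \neq 0$.

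The main obstacle I expect is bookkeeping the sparsity/triangular structure of $J\gamma^0$ cleanly — in particular pinning down exactly which permutation of rows and columns makes it triangular and verifying that the diagonal entries really are $\lambda \bar v(i)$ (one must be careful that the $t_N$-derivative and the telescoping of consecutive increments $t_r - t_{r-1}$ interact correctly at the index boundaries $r = i+1$ and $r = N$). A secondary point requiring care is confirming that $\bar v(i) \neq 0$ for all $i$ under the paper's hypotheses, and that the remainder estimates from Lemma \ref{UiamenosdeEpsilon} are genuinely uniform on the compact set $\overline{A_\varepsilon}\times \overline{B(v(0),\delta)}$ so that the order-$\delta$, order-$\varepsilon$ constants can be chosen independently of the point; once these are in hand the determinant perturbation argument is routine.
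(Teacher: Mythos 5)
The paper states this corollary without a written proof---it is intended to follow directly from the explicit formula for $\gamma^0$ in Remark \ref{representation} together with the derivative bounds in Lemma \ref{UiamenosdeEpsilon}---and your outline is exactly that argument and is correct: since $\gamma^0_i$ depends only on $t_i,\dots,t_N$, the unperturbed Jacobian is already upper triangular in the natural ordering (no permutation needed), with diagonal entries $\partial\gamma^0_i/\partial t_i=-\lambda d_i(i)=-\lambda\bar v(i)$ for $i<N$ and $\partial\gamma^0_N/\partial t_N=-\lambda d_N(0)=-\lambda\bar v(0)=-\lambda\bar v(N)$ at the boundary (using $v(N)=v(0)$), so $|\det J\gamma^0|=\lambda^N\prod_{i=1}^N\bar v(i)$, and the perturbation, whose entries are of order $\delta+\varepsilon$ by Lemma \ref{UiamenosdeEpsilon}, changes the determinant only by terms of order $\delta$ and $\varepsilon$. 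The caveats you flag (the index-boundary bookkeeping, uniformity of the remainder bounds on $A_\varepsilon\times B(v(0),\delta)$, and the need for $\lambda>0$ and $\bar v(i)>0$ for every $i$---a nondegeneracy of the synaptic weights the paper leaves implicit) are precisely the points that need to be checked, and your treatment of them is adequate.
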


We shall use this representation to show that the set $C=B(v_0,\delta)$ satisfies the localized Doeblin condition (see proposition \ref{minorisation}) with $T=N\epsilon$,  for $\delta$ and $\epsilon$ sufficiently small. Before proving this proposition, we need an extra lemma.

\begin{lema}
\label{gotoB}
Let $f_u(t_1,\ldots,t_N)=f_{u,(T_1,\ldots, T_N) ,( S_1=1,\ldots, S_N=N)}(t_1,\ldots,t_N)$ denote the joint density of $(T_1,\ldots,T_N)$ with  $(S_1,\ldots, S_N)$ restricted to the event $S$, when the starting configuration is $u$.  Under the Assumption \ref{ass:grafoInter}, it holds that for any $0<\delta<(v_0)_1$
there exists a constant $C_1>0$ such that  for all $u\in B(v_0,\delta),$
$$f_u(t_1,\ldots, t_N)\geq C_1, \  \mbox{for} \  (t_1,\ldots, t_N)\in M_{\epsilon}.$$
\end{lema}

\begin{proof}
For convenience of the reader, in this proof we shall use a function to describe the deterministic flow between consecutive spikes. Formally, for any $t\in\R_+$, consider the function $t\mapsto \Phi_{t}=(\Phi^i_t,i\in\cN)\in \R_+^{\cN}$ given by
$$
\Phi^i_t(u)=u_ie^{-\lambda t}+\bar{u}(1-e^{-\lambda t}).
$$

Since $\P_{u}(T_1> t)= \exp\left[{-\int_{0}^t} \sum_{j=1}^N\varphi(\Phi^j_s(u))ds\right]$, we immediately see that the density function of $T_1$ with $S_1=1$ given that $\U(0)=u$ is
\begin{equation}
\label{denT1}
f_{T_1,S_1=1\mid u}(t_1)=\varphi(\Phi^1_{t_1}(u)) \exp\Big[{-\int_{0}^{t_1}} \sum_{j=1}^N\varphi(\Phi^j_s(u))ds\Big],\ \mbox{for} \ t_1\geq 0.
\end{equation}
Since $u\in B(v_0, \delta)$ and $\delta<(v_0)_1$ (we can find such $\delta$ because the Assumption \ref{ass:grafoInter} implies $(v_0)_1>0$), we know that there exist positive constants $c^1_1$ and $c^1_2$ such that $c^1_1<u_1$ and for all $j=1,\ldots, N$, $u_j<c^1_2$, thus $c^1_1/N<\bar{u}<c_2^1$. But then for all $u\in B(v_0, \delta)$ and $j=1,\ldots, N$, we have that $\Phi^j_s(u)< c_2^1,$ for all $0\leq s<T_1$. 
Thus, using that $\varphi$ is non-decreasing, from the previous inequality and the identity (\ref{denT1}) it follows that
$f_{T_1, S_1=1\mid u}(t_1)\geq \varphi(c_1^1)e^{-t_1N\varphi(c_2^1)}.$ 

Now, from the definition of the process one easily sees that, given $T_1=t_1, S_1=1$ and $\U(0)=u$, the density function of the increment $T_2$ with $S_2=2$, is given, for any $t_2>t_1$, by
\begin{equation}
\label{def-densidadeT2S2}
 f_{T_2,S_2=2\mid T_1=t_1, S_1=1,u}(t_2)=\varphi\big(\Phi^ 2_{t_2}(\Delta_1(\Phi_{t_1}(u)))\big) e^{{-\int_{t_1}^{t_2}} \sum_{j=1}^N\varphi\big(\Phi^j_s(\Delta_1(\Phi_{t_1}(u))\big)ds}.
\end{equation}
Note that if we denote $K=\max_{i,j\in\cN}W_{i\to j}$ then, for all $j \in\cN$, 
$$
\Delta_1\big(\Phi_{t_1}(u)\big)_j=W_{1\rightarrow j}+\U^u_j(t^-_1)<K+c^1_2:=c^2_2.
$$

Moreover, we have that $\bU(T_1)>m_1/N$ which is greater than zero by Assumption \ref{ass:grafoInter}. Therefore, for all $t_1<t_2<T_2$, 
$$
\Phi^2_{t_2}(u)>(m_1/N)(1-e^{-\lambda(t_2-t_1)}):=c^2_1,
$$
From these two inequalities, using again the monotonicity of $\varphi$ and that the average potential is constant between successive jumps it follows that there exist positive constants $c_1^2$ and $c_2^2$ such that, for $t_2>t_1$, we have $$f_{T_2, S_2=2\mid T_1=t_1, S_1=1,u}(t_2)\geq \varphi(c^2_1)e^{-(t_2-t_1)N\varphi(c^2_2)}.$$

In this manner we obtain sequences $(c_1^n)_{n=1,\ldots,N-1}$ and $(c_2^n)_{n=1,\ldots,N-1}$ satisfying for $k=2,\ldots, N$, $$f_{T_k,S_k=k\mid T_{k-1}=t_{k-1},S_{k-1}=k-1,\ldots, T_1=t_1, S_1=1,u}(t_k)\geq \varphi(c_1^{k-1})e^{-(t_{k}-t_{k-1})N\varphi(c^{k-1}_2)},$$
where $t_k>t_{k-1}>\ldots>t_1\geq 0.$ Thus, we have that,  over $M_{\epsilon}$, the product of these conditional densities is strictly positive, finishing the proof.
\end{proof}

Now we are in position to show that the set $C=B(v_0,\delta)$ satisfies the localized Doeblin condition with $T=N\epsilon$,  for $\delta$ and $\epsilon$ small enough.
\begin{prop}[Localized Doeblin condition]
\label{minorisation}
Under the Assumption \ref{ass:grafoInter},
for any $u\in B(v_0,\delta)$, there exists a non negative function $h_u$ such that for all measurable sets $A\in\mathcal{B}(\R_N^+\setminus\{0^N\})$, 
$$
P_{N\epsilon}(u,A)\geq \int_{A}h_u(v)dv,
$$
for all $\delta$ and $\epsilon$ sufficiently small.
Moreover, there exist a measurable set $I$, with positive Lebesgue measure, and a constant $C_3>0$ such that  $h_u(v)\geq C_31_I(v)$ for all $u\in B(v_0,\delta).$
\end{prop}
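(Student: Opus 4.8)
The plan is to manufacture the minorizing density $h_u$ by conditioning on the event $A_\varepsilon\cap S$ and pushing the lower bound on the joint spike‑time density (Lemma \ref{gotoB}(i)) forward through the map $\gamma_u$ of Remark \ref{representation}, whose Jacobian is controlled by Corollary \ref{jacobian}. First I would observe that, for any measurable $A$,
$$
P_T(u,A)\ \ge\ \P_u\big(\U(T)\in A,\ (T_1,\ldots,T_N)\in A_\varepsilon,\ S\big),
$$
and that on $A_\varepsilon\cap S$ one has $\U(T)=\gamma_u(T_1^N)$ by Corollary \ref{UT}, while the joint law of $(T_1,\ldots,T_N)$ restricted to $A_\varepsilon\cap S$ has density $f_u\ge C_1$ by Lemma \ref{gotoB}(i). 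Hence $P_T(u,A)\ge C_1\int_{A_\varepsilon}\mathbf{1}_A(\gamma_u(t_1^N))\,dt_1^N$.

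Next I would perform a change of variables. By Corollary \ref{jacobian}, $|J\gamma_u(t_1^N)|=\lambda^N\prod_{i=1}^N\bar v(i)+R_\varepsilon(t_1^N,u)+R_\delta(t_1^N,u)$, so for $\delta$ and $\varepsilon$ small it is bounded above by a finite constant $M$ and below by a positive constant, uniformly in $t_1^N\in A_\varepsilon$ and $u\in B(v(0),\delta)$; moreover its sign does not change. The area formula for the $C^1$ map $\gamma_u$ then gives
$$
\int_{A_\varepsilon}\mathbf{1}_A(\gamma_u(t))\,dt\ \ge\ \frac1M\int_{A_\varepsilon}\mathbf{1}_A(\gamma_u(t))\,|J\gamma_u(t)|\,dt\ =\ \frac1M\int_{\R^N_+}\mathbf{1}_A(v)\,\#\big(\gamma_u^{-1}(v)\cap A_\varepsilon\big)\,dv\ \ge\ \frac1M\int_A\mathbf{1}_{\gamma_u(A_\varepsilon)}(v)\,dv.
$$
This already yields the first assertion with $h_u(v)=(C_1/M)\,\mathbf{1}_{\gamma_u(A_\varepsilon)}(v)$; note that no injectivity of $\gamma_u$ is needed, and that $\gamma_u(A_\varepsilon)$ is contained in a ball of radius $O(\varepsilon)$ around $v(0)\neq 0^N$, hence inside $\R^N_+\setminus\{0^N\}$.

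It remains to exhibit one fixed set $I$ of positive Lebesgue measure with $I\subseteq\gamma_u(A_\varepsilon)$ for every $u\in B(v(0),\delta)$; then $C_3=C_1/M$ works. Here I would fix a closed box $K$ compactly contained in $A_\varepsilon$. By Lemma \ref{UiamenosdeEpsilon}, Corollary \ref{UT} and Remark \ref{representation}, the perturbed maps $\gamma_u$ differ from the reference map $\gamma^0$ by at most $\eta:=c_0(\delta\varepsilon+\varepsilon^2)$ in sup norm on $K$, uniformly in $u$, and $\gamma^0$ is $C^1$ with nonvanishing Jacobian of constant sign on a neighbourhood of $K$. By the inverse function theorem applied to $\gamma^0$, the open set $\gamma^0(\operatorname{int}K)$ contains points at distance $\gtrsim\varepsilon$ from $\gamma^0(\partial K)$, so setting $I:=\{v\in\gamma^0(\operatorname{int}K):\ \operatorname{dist}(v,\gamma^0(\partial K))>\eta\}$ gives a set of positive measure provided $\eta\ll\varepsilon$, i.e. provided $\delta$ is fixed small and then $\varepsilon$ is fixed small. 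For $v\in I$ and any $u$, the straight‑line homotopy $s\mapsto\gamma^0+s(\gamma_u-\gamma^0)$ stays within $\eta$ of $\gamma^0$ on $\partial K$, hence never hits $v$ there, so the topological degree satisfies $\deg(\gamma_u,\operatorname{int}K,v)=\deg(\gamma^0,\operatorname{int}K,v)\ge 1$ (the last inequality because $J\gamma^0$ has constant positive sign); in particular $v\in\gamma_u(\operatorname{int}K)\subseteq\gamma_u(A_\varepsilon)$. Thus $h_u\ge (C_1/M)\,\mathbf{1}_I$ for all $u\in B(v(0),\delta)$, finishing the proof.

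The main obstacle is the last step: producing a single positive‑measure set $I$ lying inside every perturbed image $\gamma_u(A_\varepsilon)$. The delicate point is the scaling mismatch — $\gamma^0(A_\varepsilon)$ is only $O(\varepsilon)$ wide, whereas the $u$‑dependent perturbation has size $O(\delta\varepsilon+\varepsilon^2)$ — so one must first choose $\delta$ small and then $\varepsilon$ small (consistently with Lemma \ref{gotoB} and Corollary \ref{jacobian}) so that the perturbation is negligible on the scale of the image; the degree (or quantitative inverse‑function) argument then handles the covering. Everything else is bookkeeping with the uniform remainder estimates already supplied by Lemma \ref{UiamenosdeEpsilon}, Corollary \ref{UT} and Corollary \ref{jacobian}.
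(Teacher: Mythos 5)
Your proposal is correct, and its skeleton is the paper's: condition on $A_{\varepsilon}\cap S$, push the lower bound $f_u\geq C_1$ from Lemma \ref{gotoB}(i) through the map $\gamma_u$ of Remark \ref{representation} using the Jacobian control of Corollary \ref{jacobian}, and then exhibit one fixed set $I$ contained in every image $\gamma_u(A_{\varepsilon})$. Where you differ is in the two technical devices. For the density, the paper inverts $\gamma_u$ and writes $h_u(v)=f_u(g_u(v))\lvert Jg_u(v)\rvert$ with $g_u=\gamma_u^{-1}$, whereas you use the area formula with a uniform upper bound on $\lvert J\gamma_u\rvert$, so you never need injectivity of the perturbed map --- a small but genuine simplification, since the paper's nonvanishing Jacobian only gives local invertibility and global injectivity on $A_{\varepsilon}$ is left implicit there. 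For the covering step, both arguments run along the same straight-line homotopy from $\gamma^0$ to $\gamma_u$; the paper tracks the solution of $F(s,t(s))=0$ by an ODE (a quantitative implicit-function continuation, taking $I=\gamma^0(B_{\varepsilon})$ and using that the motion is of order $\delta\varepsilon+\varepsilon^2$ against the $\varepsilon/4$ margin of $B_{\varepsilon}$ inside $A_{\varepsilon}$), and this is exactly why Lemma \ref{UiamenosdeEpsilon} bothers to control the \emph{derivatives} of the remainders; your Brouwer-degree/homotopy-invariance argument needs only the sup-norm bound $\lVert\gamma_u-\gamma^0\rVert_\infty=O(\delta\varepsilon+\varepsilon^2)$ on $\partial K$ plus the nonzero constant-sign Jacobian of the affine reference map $\gamma^0$, so it is slightly more robust and makes the ``first $\delta$ small, then $\varepsilon$ small'' scaling point explicit, at the price of invoking degree theory. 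Both choices of $I$ (your $\eta$-interior of $\gamma^0(\operatorname{int}K)$, the paper's $\gamma^0(B_{\varepsilon})$) have positive Lebesgue measure of order $\varepsilon^N$, so the conclusions coincide.
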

\begin{proof}

For each $u\in B(v_0, \delta)$, as in Corollary \ref{jacobian} let us call $\gamma_u:M_{\epsilon}\rightarrow I_u$ , the map $$\gamma_u(t_1^N)=\gamma^0(t_1^N)+R_{\delta\epsilon}(t_1^N)+R_{\epsilon^2}(t_1^N),$$
where $I_u=\gamma_u(M_{\epsilon}).$
From the corollaries \ref{UT}, \ref{jacobian}  and the remark \ref{representation}, it follows that for each $u\in B(v_0,\delta)$, conditioning to $M_{\epsilon}\cap S$, the random vector $\U^u(T)$ has a density $h_u$, where
$$h_u(v)=\left\lbrace
		\begin{array}{ll}
			f_u(g_u(v))\mid Jg_u(v)\mid,  &  \ \mbox{if} \  v\in I_u 	\\
			0 & \mbox{otherwise}		\
		\end{array},
	\right.
 $$
with $g_u:I_u\rightarrow M_{\epsilon}$ being the inverse of $\gamma_u$. This concludes the first part of the proposition.

The proof of second part is more delicate and requires some work. From Corollary  \ref{jacobian} and Lemma \ref{gotoB}, it suffices to prove that there exists a set $I$ such that $I \subset\cap_{u\in B(v_0,\delta)}I_u.$

Now, consider the event $B_{\epsilon}$ defined by
$$
B_{\epsilon}= S \cap \Big\{(i-1)\epsilon+\frac{\epsilon}{4}<T_i<i\epsilon-\frac{\epsilon}{4}, \ i=1,\cdots,N\Big\},
$$
define $I=\gamma_0(B_{\epsilon})$ and fix $v\in I.$ We want to show that for all $u\in B(v_0,\delta)$ there is an vector $t_1^N=t_1^N(u)=(t_1(u),\ldots,t_N(u))\in M_{\epsilon}$ such that $\gamma_u(t_1^N)=v.$
To this end, we introduce the function $F(s,t_1^N)=v-\gamma_{0}(t_1^N)-s[R_{\delta\epsilon}(t_1^N,u)+R_{\epsilon^2}(t_1^N,u)],$ for $s\in[0,1]$ and $t_1^N\in M_{\epsilon}.$ Note that we need to show the existence  of vector $t_1^N\in M_{\epsilon}$ such that $F(1,t_1^N)=0.$ This means that we need to study the equation $F(s,t_1^N)=0$ with $t_1^N$ as function of $s$. To ease the notation, from now on we will write $t$ instead of $t_1^N.$

Note that by the definition of $I$, there exists  $t_0\in B_{\epsilon}$ such that $F(0,t_0)=0.$ Besides, $t=t(s)$ is solution of the equation $F(s,t)=0$ if and only if it satisfies
\begin{eqnarray*}
\lefteqn{0=-D\gamma _0(t(s))\cdot     {\frac{dt(s)}{ds}}-[R_{\delta\epsilon}(t(s),u)+R_{\epsilon^2}(t(s),u)]}\\
&& \hspace{2cm}-s[DR_{\delta\epsilon}(t(s),u)+DR_{\epsilon^2}(t(s),u)]\cdot{\frac{dt(s)}{ds}}
\end{eqnarray*}
or equivalently,
\begin{eqnarray}
\label{eqexistence}
\lefteqn{\Big[-D\gamma_0(t(s))-s\big(DR_{\delta\epsilon}(t(s),u)+DR_{\epsilon^2}(t(s),u)\big)\Big]\cdot{\frac{dt(s)}{ds}}}\nonumber
\\ && \hspace{4cm}=R_{\delta\epsilon}(t(s),u)+R_{\epsilon^2}(t(s),u),
\end{eqnarray}
where $Df(\cdot)$ stands for the differential operator of $f$.
By corollary \ref{jacobian}, the linear operator inside the brackets is invertible and the function on the right hand side is of order $\delta\epsilon +\epsilon^2$ whose the derivative is of order $\delta+\epsilon.$ Therefore,  it follows that $t=t(s)$ is a solution of (\ref{eqexistence}) if and only if  it is the solution of the ODE of the form
\begin{equation}
{\frac{dt(s)}{ds}}=H(s,t(s)), \ t(0)=t_0,
\end{equation}
where the derivative of $H$ with respect to $t$ exists and it is of order $\delta+\epsilon$. In particular, it is limited and therefore the ODE has unique solution $t=t(s)$ for all $s$ such that $t(s)\in M_{\epsilon}.$ Since $t(s)$ moves as $H$ which is of order $\delta\epsilon+\epsilon^2$ and the initial condition $t_0$ is at a distance of order $\epsilon$ of $M_{\epsilon}$, we have that $t(1)\in M_{\epsilon}$,  decreasing both $\delta$ and $\epsilon$ if necessary.  Hence, we have that $I\subset \cap_{u\in B}I_u.$
\end{proof}
It is not difficult to see that we may replace $B(v_0,\delta)$ in the Proposition \ref{minorisation} by any compact subset $K$ of $\{u\in \R_+^N: \sum_{i\in\cN}u_i>m_{*} \}$ where $m_*:=\min_{i\in\cN}\{m_i\}$. This is the content of the next result.
\begin{prop}
\label{minorisation1}
Let $K$ be any compact subset of $\{u\in R_+^N: \sum_{i\in\cN}u_i>m_{*} \}$ where $m_*:=\min_{i\in\cN}\{m_i\}$. Under the Assumption \ref{ass:grafoInter}, there exist constants $\epsilon>0$ and $T>0$ as well as a probability measure $\nu$ on $\R_+^N$ such that for all measurable sets $A\in\mathcal{B}(\R_N^+\setminus\{0^N\})$ and for any $u\in K$, 
$$
P_{T}(u,A)\geq \epsilon\nu(A).
$$
\end{prop}
\begin{proof}
Indeed, we may proceed as in Lemma \ref{UiamenosdeEpsilon} to deduce that given $\delta>0$ there exists $\epsilon_1>0$ such that on event $M_{\epsilon_1}\cap S\cap\{N\epsilon_1<T_{N+1}\}$, for any $\U(0)=u\in K$, it holds $\U(N\epsilon_1)\in B(v_0,\delta)$. Now notice that the proof of Lemma \ref{gotoB} works in the same way for $K$ instead of $B(v_0,\delta)$ since $K$ is a compact subset of $\{u\in R_+^N: \sum_{i\in\cN}u_i>m_{*} \}$ and $m_{*}>0,$ so that the event $M_{\epsilon_1}\cap S\cap\{N\epsilon_1<T_{N+1}\}$ has positive probability uniformly on the initial configuration $u\in K.$ The result follows then by Proposition \ref{minorisation}.
\end{proof} 

It remains to find a suitable compact set $K$ satisfying condition (i) of a recurrent regeneration set. This is the content of next result.
\begin{prop}
\label{regen}
There exists a positive finite constant $r$ such that 
$$
\sup_{u\in \bar{B(0,r)}}\E_u(T^+_{\bar{B(0,r)}})<\infty.
$$
\end{prop}
\begin{proof}
By equation \eqref{fUtil} (recall now that $\alpha=0$), it is not difficult to see that $\mathcal{L}V(u)<-1$ for all $u\notin \bar{B}(0,r)$ with $r>Nm^*+a$ for some $a>0$. Thus, proceeding as in the proof of Proposition \ref{T1>delta_U>r}, we deduce that $\E_u(T_{\bar{B}(0,r)})\leq V(u)$ for all $u\notin \bar{B}(0,r)$. 

Finally, given $u\in \bar{B}(0,r)$, we have $||\U(T_{\bar{B}^c(0,r)})||\leq r+\max_{i,j}{W_{i\to j}}:=D$, so that by the strong Markov property for any $u\in \bar{B}(0,r)$,
$$
\E_u\big(T^+_{\bar{B}(0,r)}\big)\leq \sup_{\substack{ v\notin \bar{B}(0,r): ||v||\leq D}}\E_v(T_{\bar{B}(0,r)})\leq ND.$$
\end{proof}
The proof of Theorem \ref{med_inv_semleaking} is now easy.
\begin{proof}[of Theorem \ref{med_inv_semleaking}]
Indeed, after the first spike the processes $(\U(t))_{t\geq 0}$  never exits from the set $\{u\in \R_+^N: \sum_{i\in\cN}u_i>m_{*} \}$. Thus, taking $K=\{u\in \R_+^N: \sum_{i\in\cN}u_i>m_{*} \}\cap \bar{B}(r,0)$ with $r$ as in Proposition \ref{regen}, it follows from Propositions \ref{minorisation1} and \ref{regen} that $K$ is a recurrent regeneration set.   
\end{proof}

\section{Final discussion}
\label{FinalDisc}

The present paper presented a new class of stochastic processes  and studied its asymptotic distribution in absence and presence of leak currents, without external stimuli in either cases.
Similar results have been recently obtained by Robert and Touboul in \cite{Robert:14}. Their model considers a constant leakage rate without the gap junction term ($\alpha =1$  \mbox{and}  $\lambda=0$ in our context) where the synaptic weights are positive i.i.d random variables with finite mean. They showed that if $\int_{0}^1 \varphi(x) / x \, dx<+\infty$, then no spike occurs after some finite time. This condition imposed on the spiking rate $\varphi$ is the same type as ours. They also proved that when $\varphi(0)>0$ and the synaptic weights are bounded almost surely, the process admits a unique non trivial invariant measure. In other words, when external stimuli are considered the system remains active almost surely, contrasting with non external stimuli case. We did not study this case, nonetheless it could be treated similarly as we did in Theorem \ref{med_inv_semleaking}

The non-decreasing assumption on the spiking rate $\varphi$ can be weakened. For instance, we could replace this assumption requiring that $\varphi$ is increasing only on the interval $[0,r]$, $\varphi(u)\geq \varphi(r)>0$ for $u\geq r$ and bounded in any compact interval. Under this assumption the proof of Theorem \ref{UstopSpiking} would be the same. In order to prove Theorem \ref{med_inv_semleaking}, we need to check if Lemma \ref{gotoB} works under this weaker condition. 
One can check that under this assumption 
$f_{u,T_1}(t_1)\geq \varphi(c_1)\wedge\varphi(r)\exp\{-t\varphi(c_2)\}>0,$ for all 
$u\in B(v_0,\delta)$, where $\delta$ and $c_1$ as in the proof of Lemma \ref{gotoB} and $c_2$ is the maximum of $\varphi$ over $B(v_0,\delta)$. Proceeding in this way, it is straightforward to see that the proof of Lemma \ref{gotoB} also works.

\section{Acknowledgments}

We are indebted to R.~Cofr\'e for the discussions which originate this work. We thank E.~Presutti for   
helpful suggestions and stimulating discussions; A.~Galves and E.~L\"ocherbach for the discussions which led us to  the proof of Theorem \ref{EvaStopSpik}. 

This article  was produced as part of the activities of FAPESP  Research, Innovation and Dissemination Center for Neuromathematics (grant 2013/07699-0), S.Paulo Research Foundation). A. Duarte was fully supported by a CNPq fellowship (grant 141270/2013-6) and G. Ost was fully supported by a CNPq fellowship (grant 141482/2013-3).

\bibliography{Bibli}{}
\bibliographystyle{plain}
\nocite{GalEva:13}
\nocite{Errico:14}
\nocite{Davis:84}
\nocite{Thieullen:12}
\nocite{Gerstner:2002:SNM:583784}
\nocite{Robert:14}
\nocite{Evafou:14}

\end{document}